
\documentclass[letterpaper, 10 pt, conference]{ieeeconf}  

\IEEEoverridecommandlockouts                              
\overrideIEEEmargins

\usepackage{graphics} 
\usepackage{epsfig} 
\usepackage{mathptmx} 
\usepackage{times} 

\usepackage{amsmath,amsfonts,amssymb}
\newtheorem{theorem}{Theorem}
\newtheorem{lemma}{Lemma}
\newtheorem{problem}{Problem}

\usepackage{color}

\newcommand{\ikd}[1]{#1}
\newcommand{\argmax}{\mathop{\rm arg~max~}\limits}
\newtheorem{assumption}{Assumption}


\title{\LARGE \bf
Switched Systems Control via Discreteness-Promoting Regularization
}


\author{Masaaki Nagahara, Takuya Ikeda, Ritsuki Hoshimoto
\thanks{This work was partly supported by 
JSPS KAKENHI Grant Numbers JP23K26130, JP22H00512,  JP22KK0155, JP24K21314, 24K17300, and also JST ASPIRE Program Grant Number JPMJAP2402.
}
\thanks{M. Nagahara and R. Hoshimono are with Hiroshima University, Hiroshima, Japan. {\tt nagahara@ieee.org}, {\tt ritsuki.2002.730@icloud.com}}%
\thanks{T. Ikeda is with The University of Kitakyushu, Fukuoka, Japan.
{\tt t-ikeda@kitakyu-u.ac.jp}}%
}

\begin{document}

\maketitle
\thispagestyle{empty}
\pagestyle{empty}

\begin{abstract}
This paper proposes a novel method for designing finite-horizon discrete-valued switching signals in linear switched systems based on discreteness-promoting regularization. The inherent combinatorial optimization problem is reformulated as a continuous optimization problem with a non-convex regularization term that promotes discreteness of the control. We prove that any solution obtained from the relaxed problem is also a solution to the original problem. The resulting non-convex optimization problem is efficiently solved through time discretization. Numerical examples demonstrate the effectiveness of the proposed method.
\end{abstract}

\section{INTRODUCTION}
In this paper, we explore finite-horizon control of linear switched systems, characterized by two or more linear subsystems governed by a switching signal \cite{LibMor99,LinAnt09}.
Real applications of switched systems can be found in hybrid electric vehicles \cite{HeTiaNiXu17}, autonomous vehicles \cite{ZhaIfgPui24}, 
semiactive suspensions \cite{GerColBol08}, power systems \cite{NogKoo20}, and chemical processes \cite{NiuZhaFanChe15}.

Mathematically, the design of switching signals is the problem of \emph{discrete-valued} control. For example, any switching signal between two systems should be binary. Hence, when we consider the optimal control of switched systems, the problem is naturally formulated as mixed-integer programming \cite{ZhuAnt15}, which is hard to solve in real time.

On the other hand, the paper \cite{IkeNagOno17} has proposed a convex optimization approach to discrete-valued control of linear time-invariant systems. This method is based on the sum-of-absolute-values (SOAV) optimization \cite{Nag15}, which is formulated as the sum of $L^1$ norms of biased control signals.
In this paper, we adapt this method to the optimal design of switching signals of linear switched systems. Specifically, we reformulate the design as an optimal control problem with the \emph{one-hot} control, containing all zero elements except for a single one that represents the current switching mode.

In this representation, the $L^1$ norm of the one-hot control is always constant, and hence we cannot use the $L^1$ norm-based SOAV optimization. Instead, we propose to use non-convex cost functions \cite{Ike24}, such as the $L^1-L^2$ ($L^1$ minus $L^2$) norm \cite{YinLouHeXin15}, the $L^p$ norm with $0<p<1$ \cite{IkeKas19}, and the minimax concave penalty \cite{HayIkeNag24}, to promote the one-hot property of the optimal control.
Under mild assumptions on the regularization term, we show that any solution to the relaxed problem is one-hot, and also a solution to the original problem. Moreover, we propose an efficient numerical optimization method based on time discretization \cite{Nag20}, or  multiple shooting \cite{DieBocDieWie06}, which can be solved by the DC (difference of convex functions) programming \cite{Ike24},
or by using numerical optimization packages such as \texttt{cvxpy} \cite{DiaBoy16,AgrVerDiaBoy18} and \texttt{casadi} \cite{Casadi}.

We demonstrate numerical examples of switching control of second-order two- and three-mode unstable linear systems. We implement the proposed finite-horizon control to construct feedback control by the model predictive control strategy. In these examples, we show that the designed switching signals successfully stabilize the unstable systems.

\subsection*{Notation}
For a vector $x=[x_1,x_2,\ldots,x_N]^\top\in\mathbb{R}^N$, the $\ell^p$ norm is defined by
\begin{equation}
    \|x\|_p \triangleq \left(\sum_{i=1}^N |x_i|^p\right)^{1/p}.
\end{equation}
For two vectors $x=[x_1,x_2,\ldots,x_N]^\top$ and
$y=[y_1,y_2,\ldots,y_N]^\top$, the Hadamard product is denoted by $x\odot y$, that is,
\begin{equation}
    x\odot y \triangleq 
    [x_1y_1,  x_2y_2,  \ldots,  x_Ny_N]^\top.
\end{equation}
For a function $u:[0,T]\rightarrow \mathbb{R}^N$, the $L^p$ norm is defined by
\begin{equation}
    \|u\|_{L^p} \triangleq \left(\int_0^T \|u(t)\|_p^p dt\right)^{1/p}.
\end{equation}

\section{MATHEMATICAL FORMULATION}
Let us consider the following linear switched system:
\begin{equation}
    \dot{x}(t) = A_{\sigma(t)} x(t),\quad t\geq 0, \quad x(0)=\xi\in\mathbb{R}^n,
    \label{eq:plant}
\end{equation}
where $\sigma(t)\in\{1,2,\ldots,N\}$ is the switching signal to be designed, and $A_i\in\mathbb{R}^{n\times n}$, $i\in\{1,2,\ldots,N\}$ are fixed matrices.
The control purpose is to design the switching signal $\sigma(t)$ over a finite horizon $[0,T]$ with fixed $T>0$ such that the state $x(t)$ approaches the origin.
We should note that there is generally no $\sigma(t)$ over $[0,T]$ that achieves $x(T)=0$ for any finite $T>0$ since there is no control input to \eqref{eq:plant}.
Therefore, we instead minimize $x(T)^\top Qx(T)$ with positive definite $Q$. Namely, we consider the following optimal control problem:
\begin{problem}\label{prob:original}
    For the switched system \eqref{eq:plant}, find the switching signal $\sigma(t)\in\{1,2,\ldots,N\}$ for  $t\in[0,T]$ that minimizes
    \begin{equation}\label{eq:cost_original}
        J(u) = x(T)^\top Q x(T).
    \end{equation}
\end{problem}

\section{\ikd{RELAXATION TO CONTINUOUS OPTIMIZATION}} 
In this section, we propose relaxation of Problem \ref{prob:original} to continuous optimization that can be  efficiently solved by numerical optimization that will be discussed in Section \ref{sec:numerical computation}.
We also show that the solution to the relaxed optimization problem is also a solution to the original Problem \ref{prob:original}.

\subsection{Relaxed Problem}
To obtain the optimal control numerically, we transform the system \eqref{eq:plant} into the following system:
\begin{equation}
    \dot{x}(t) = \sum_{i=1}^N A_i x(t) u_i(t)
    \label{eq:plant_transformed}
\end{equation}
where $u_i(t)\in\{0,1\}$, $i=1,2,\ldots,N$ are the new control variables defined as
\begin{equation}
    u_i(t) = \begin{cases} 
                1, \text{~if~} \sigma(t)=i,\\
                0, \text{~otherwise.}
             \end{cases}   
\end{equation}  
The plant model \eqref{eq:plant_transformed} is represented as
\begin{equation}
    \dot{x}(t) = \Phi\bigl(x(t)\bigr)u(t),
\end{equation}
where
\begin{equation}
    \Phi(x) \triangleq \begin{bmatrix}A_1x,A_2x,\ldots,A_Nx\end{bmatrix},\quad
    u(t) \triangleq \begin{bmatrix}u_1(t)\\u_2(t)\\ \vdots \\ u_N(t)\end{bmatrix}.
    \label{eq:plant_transformed_vector}
\end{equation}
The vector $u(t)$ is so-called a \emph{one-hot} vector that satisfies
\begin{equation}
    u_i(t) \in \{0,1\},\quad \sum_{i=1}^N u_i(t) = 1,\quad
    i\in \{1,2,\ldots,N\}
\label{eq:original constraints}
\end{equation}
for almost all $t\in[0,T]$.
From \eqref{eq:plant_transformed_vector}, the linear switched system in \eqref{eq:plant} is described as a control-affine system with $N$-dimensional control input.

Then, the original problem (Problem \ref{prob:original}) is equivalently translated as follows:
\begin{problem}\label{prob:original2}
    For the system \eqref{eq:plant_transformed}, find the control $u(t)\in\{0,1\}^N$, $t\in[0,T]$ that \ikd{satisfies the constraints in~\eqref{eq:original constraints} and} minimizes
    $J(u) = x(T)^\top Q x(T)$.
\end{problem}

This problem is a discrete-valued control problem.
This type of problem can be described as a mixed-integer program, which often has a heavy computational burden.
Instead, we adopt the following relaxed constraint:
\begin{equation} 
    u_i(t) \in [0,1],\quad \sum_{i=1}^N u_i(t) = 1,\quad
    i\in \{1,2,\ldots,N\}.
\label{eq:constraints}
\end{equation}
At the same time, to promote the discreteness property $u_i(t)\in\{0,1\}$ in \eqref{eq:original constraints}, we add a regularization term $\Omega(u)$ to the cost function $J(u)$. 
Namely, we consider the following regularized cost function:
\begin{equation}
    J_\lambda (u) \triangleq x(T)^\top Qx(T) + \lambda \Omega(u),
        \label{eq:cost}
\end{equation}
where $\lambda>0$ is the regularization hyper-parameter.
In particular, we adopt $\Omega(u)$ in the form of
\begin{equation}
     \Omega(u)\triangleq\int_0^T \psi(u(t)) dt,
     \label{eq:Omega}
\end{equation}
where $\psi$ is a function.

Now, we summarize the relaxed problem as follows.
\begin{problem}\label{prob:relaxed_ver2}
    Find the control $u(t)\in\mathbb{R}^N$, $t\in[0,T]$ that solves the following optimal control problem:
\begin{equation}
\begin{aligned}
  & \underset{u}{\text{minimize}}
  & & x(T)^\top Q x(T) + \lambda \int_0^T \psi(u(t)) dt\\
  & \text{subject to}
  & & \dot{x}(t) = \Phi\bigl(x(t)\bigr)u(t), \quad x(0)=\xi,\\
  & & & u(t)\in\mathcal{U} \mbox{~for~almost~all~} t\in[0, T],
\end{aligned}
\end{equation}
where $\mathcal{U}$ is defined by
\begin{equation}
    \mathcal{U} \triangleq
    \left\{ u\in\mathbb{R}^{N}:
    u_i \in [0, 1],~\sum_{i=1}^N u_i = 1
    \right\}.
\end{equation} 
\end{problem}

\subsection{Functions that promote discreteness}
An important problem is to find an appropriate function $\psi(u)$ with which any solution to Problem \ref{prob:relaxed_ver2} is also a solution to the original Problem \ref{prob:original2}.

First, we should note that the convex optimization approach by the sum of absolute values \cite{IkeNagOno17} cannot be adopted here. 
In fact, the regularization term by this is described as
\begin{equation}\begin{split}
    \Omega(u) &= \int_0^T \left(\|u(t)\|_1 + \|1-u(t)\|_1\right) dt\\ &
    = \int_0^T \sum_{i=1}^{N} \left(|u_i(t)| + |1-u_i(t)|\right) dt,
\end{split}\label{eq:regularization term SOAV}
\end{equation}
but it follows from the constraints in \eqref{eq:constraints} that $\Omega(u)$ is constant for all $u$ satisfying \eqref{eq:constraints}.

Instead, as an example, we can intuitively adopt the following regularization term:
\begin{equation}
\begin{split}
     \Omega(u)&=\int_0^T \left\|u(t)\odot \bigl(1-u(t)\bigr)\right\|_1 dt\\
     &= \int_0^T \sum_{i=1}^N \left|u_i(t)\bigl(1-u_i(t)\bigr)\right| dt.
\end{split}     
     \label{eq:regularization term}
\end{equation}
This $L^1$-norm regularization term forces $u_i(t)$ to take values of 0 and 1 for significant time durations \cite{Nag15,IkeNagOno17}.
Figure \ref{fig:psi} shows the graph of function $\psi_i(u)=u(1-u)$. We can see that $\psi_i(0)=\psi_i(1)=0$, which promotes the discreteness property in \eqref{eq:original constraints} as shown in the next subsection.
\begin{figure}[t]
 \includegraphics[width=\linewidth]{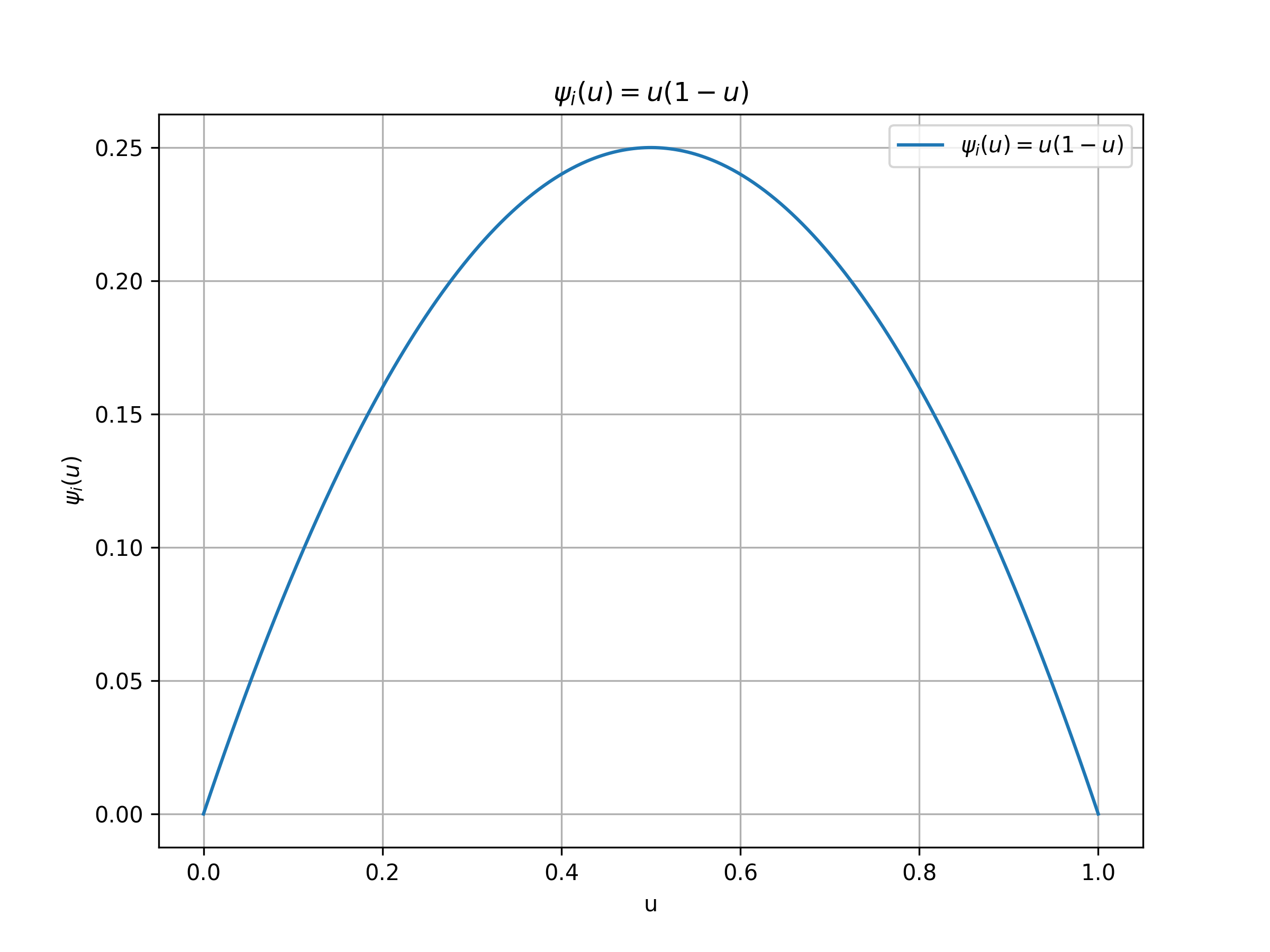}
 \caption{The graph of $\psi_i(u)=u(1-u)$.}
 \label{fig:psi}
\end{figure}
Additionally, since $u_i(t)\in[0,1]$ for any $t$ and $i$ from \eqref{eq:constraints}, 
the regularization term \eqref{eq:regularization term} can be rewritten as
\begin{equation}
\begin{split}
    \Omega(u) &= \int_0^T \sum_i^N u_i(t) dt - \int_0^T \sum_{i}^N u_i(t)^2 dt\\
    &= \|u\|_{L^1} - \|u\|_{L^2}^2.
\end{split}    
\end{equation}
The function $\|u\|_{L^1} - \|u\|_{L^2}^2$ is called the $L^1-L^2$ ($L^1$ minus $L^2$) function, known to promote  \emph{sparsity} of $u$ \cite{YinLouHeXin15}. 
Meanwhile, the set of $u_i(t)$ for each $t$ satisfying the constraints in \eqref{eq:constraints} is the \emph{convex hul} of the points defined in \eqref{eq:original constraints}, which is the $(N-1)$-simplex in $\mathbb{R}^N$.
These relaxations yield a tractable, non-convex optimization problem known as difference-of-convex (DC) programming \cite{Ike24}.

Now, we propose functions that promote discreteness like the $L_1-L_2$ function. Such functions are characterized as follows.
\begin{assumption}\label{ass:psi}
    The function $\psi:\mathbb{R}^N\to\mathbb{R}$ satisfies the following:
    \begin{enumerate}
        \item $\psi$ is additively separable, i.e., there exist measurable functions $\psi_i:\mathbb{R}\to\mathbb{R}$, $i=1,2,\dots,N$, that satisfy 
        \begin{equation}
            \psi(u) = \sum_{i=1}^{N} \psi_i(u_i),
        \end{equation}
        where $u = [u_1, u_2, \dots, u_N]^\top$.
        \item $\psi_i(0) = \psi_i(1) = 0$ for all $i\in\{1,2,\dots,N\}$.
        \item $\psi_i(u_i) > 0$ on $(0, 1)$ for all $i\in\{1,2,\dots,N\}$.
    \end{enumerate}
\end{assumption}
We note that $\psi_i(u)=u(1-u)$ in \eqref{eq:regularization term} satisfies this assumption, whereas $\psi_i(u)=|u|+|1-u|$ in \eqref{eq:regularization term SOAV} and its shifted version $\psi_i(u)=|u|+|1-u|-1$ do not.

In the next section, we show 
that any solution to the relaxed Problem \ref{prob:relaxed_ver2} is a solution to the original Problem \ref{prob:original2} under this assumption.

\subsection{Validity theorem for the relaxed problem}
First, we show that the optimal control for Problem \ref{prob:relaxed_ver2} takes discrete values of 0 and 1.
For this, we prepare the following lemma.

\begin{lemma}\label{lem:max_cond}
Suppose Assumption~\ref{ass:psi} \ikd{holds}, and consider the following optimization problem:
\begin{equation}\label{prob:lemma}
\begin{aligned}
  & \underset{u\in\mathcal{U}}{\text{maximize}}
  & & \sum_{i=1}^{N} \left(\rho_i u_i - \lambda \psi_i(u_i)\right),
\end{aligned}
\end{equation}
where $\lambda>0$,
and 
$\rho_1, \rho_2, \dots, \rho_N$ are given real numbers satisfying
\begin{equation}
    \rho_{1} = \cdots = \rho_{m} > \rho_{m+1} \geq \cdots \geq \rho_{N}
\end{equation}
for some $m\in\{1,2,\dots,N\}$.
Then, any optimal solution $u^\ast$ to problem~\eqref{prob:lemma} satisfies $u^\ast\in\{e_1,\dots,e_m\}$, 
where $e_i$ is the $i$th canonical vector in $\mathbb{R}^N$.
\end{lemma}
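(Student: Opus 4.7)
The plan is to bound the objective from above by a constant attainable on $\{e_1,\dots,e_m\}$, and then read off exactly which vectors achieve the bound from the equality conditions.

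First, I would note that Assumption~\ref{ass:psi} (parts 2 and 3) implies $\psi_i \ge 0$ on $[0,1]$ with zeros only at $0$ and $1$. Since $u\in\mathcal{U}$ means $u_i\in[0,1]$, $\sum_i u_i=1$, and $\lambda>0$, this gives the two elementary bounds
\begin{equation}
\sum_{i=1}^N \rho_i u_i \;\le\; \rho_1 \sum_{i=1}^N u_i \;=\; \rho_1,\qquad
-\lambda \sum_{i=1}^N \psi_i(u_i) \;\le\; 0,
\end{equation}
where the first uses $\rho_1 = \max_i \rho_i$ together with $u_i\ge 0$. Adding these, the objective in \eqref{prob:lemma} is at most $\rho_1$. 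Evaluating at $u=e_1$ gives $\rho_1 - \lambda \psi_1(1) - \lambda\sum_{i\ne 1}\psi_i(0) = \rho_1$, so the bound is tight and the optimal value equals $\rho_1$.

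Next I would extract the equality conditions. Let $u^\ast$ be any maximizer, so both inequalities above must hold with equality. Equality in the second one, combined with $\psi_i\ge 0$, forces $\psi_i(u_i^\ast)=0$ for every $i$; by parts 2 and 3 of Assumption~\ref{ass:psi}, this means $u_i^\ast\in\{0,1\}$ for every $i$. The simplex constraint $\sum_i u_i^\ast = 1$ then forces exactly one coordinate to equal $1$, so $u^\ast = e_k$ for some $k$. Finally, equality in the first bound gives $\rho_k = \sum_i \rho_i u_i^\ast = \rho_1$, and the ordering of the $\rho_i$'s forces $k\in\{1,\dots,m\}$.

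There is no real obstacle here: the whole argument is driven by the fact that the additive separability and the vanishing-at-endpoints property of $\psi_i$ make the regularizer a nonnegative penalty whose zero set on the simplex is exactly the vertex set. The one point worth flagging is that we genuinely need $\lambda>0$ (to turn $\psi_i\ge 0$ into a nonpositive contribution) and the strict positivity of $\psi_i$ on the open interval $(0,1)$ (to conclude $u_i^\ast\in\{0,1\}$ rather than merely $u_i^\ast\in\psi_i^{-1}(0)$).
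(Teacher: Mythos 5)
Your proof is correct and takes essentially the same route as the paper: both arguments show the optimal value equals $\rho_1$ (attained at $e_1$) and then extract the equality conditions, using $\psi_i \ge 0$ on $[0,1]$ with zero set $\{0,1\}$ to force $u^\ast$ onto a vertex and the strict gap $\rho_1 > \rho_{m+1}$ to restrict the vertex to $\{e_1,\dots,e_m\}$. The only cosmetic difference is that the paper writes the objective as $\rho_1 + \sum_{i=m+1}^{N}(\rho_i-\rho_1)u_i - \lambda\psi(u)$ and sandwiches $f(u^\ast)-f(e_1)$ between $0$ and $0$, whereas you bound the linear and penalty parts separately.
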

\begin{proof}
Let us denote by $f$ the cost function in problem~\eqref{prob:lemma}, and take any optimal solution $u^\ast$, i.e.,
$f(u^\ast) \geq f(u)$ for all $u\in\mathcal{U}$.
Note that, for any $u\in\mathcal{U}$, we have 
\begin{equation}\label{eq:lemma_cost}
    f(u) = \rho_1 + \sum_{i=m+1}^{N} \left(\rho_i - \rho_1\right)u_i - \lambda \psi(u).
\end{equation}
Hence, we have
\begin{equation}\label{eq:lemma_cost2}
    0 \leq f(u^\ast) - f(e_1) = \sum_{i=m+1}^{N} \left(\rho_i - \rho_1\right)u_i^\ast - \lambda \psi(u^\ast) \leq 0,
\end{equation}
where 
the first relation follows from $e_1\in\mathcal{U}$ and the optimality of $u^\ast$,
the second relation follows from \eqref{eq:lemma_cost} and $\rho_1 = f(e_1)$,
and the third relation follows from Assumption~\ref{ass:psi}.
It follows from~\eqref{eq:lemma_cost2} that we have
$u_{i}^\ast = 0$ for all $i \geq m+1$
and $\psi_i(u_i^\ast) = 0$ for all $i \leq m$.
This completes the proof.
\end{proof}

By this lemma, we show the discreteness of the solution(s) to Problem \ref{prob:relaxed_ver2}.
\begin{lemma}\label{lemma:discreteness}
    Suppose Assumption~\ref{ass:psi} \ikd{holds}.
    Any optimal solution $u^*$ to Problem \ref{prob:relaxed_ver2} satisfies the \ikd{constraints in} \eqref{eq:original constraints} for almost all $t\in[0,T]$.
\end{lemma}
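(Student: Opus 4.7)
The plan is to establish pointwise optimality via Pontryagin's Minimum Principle (PMP) and then reduce to Lemma~\ref{lem:max_cond} at almost every time instant.

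First, I would form the Hamiltonian
\[
H(x, u, p) = \lambda\, \psi(u) + p^\top \Phi(x) u = \lambda\,\psi(u) + \sum_{i=1}^N u_i \, p^\top A_i x,
\]
and introduce a costate $p:[0,T]\to\mathbb{R}^n$ satisfying the adjoint equation
\[
\dot p(t) = -\Bigl(\sum_{i=1}^N u_i^*(t)\, A_i\Bigr)^{\top} p(t), \qquad p(T) = 2 Q x^*(T),
\]
where $(x^*,u^*)$ is an optimal state-control pair for Problem~\ref{prob:relaxed_ver2}. PMP then asserts that for almost every $t\in[0,T]$, the control value $u^*(t)$ minimizes $u\mapsto H(x^*(t), u, p(t))$ over $\mathcal{U}$.

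Next, I would recognize that this minimization is equivalent to the maximization
\[
\max_{u\in\mathcal{U}}\sum_{i=1}^N\bigl(\rho_i(t)\,u_i - \lambda\,\psi_i(u_i)\bigr), \quad \rho_i(t) \triangleq -p(t)^\top A_i x^*(t).
\]
At each such $t$, permuting indices so that the $\rho_i(t)$ appear in non-increasing order puts the problem in exactly the form treated by Lemma~\ref{lem:max_cond}. That lemma then forces $u^*(t)$ to coincide with some canonical basis vector $e_i$ of $\mathbb{R}^N$; since this holds for almost every $t$, $u^*$ satisfies the discreteness constraints in~\eqref{eq:original constraints}, as required.

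The main obstacle will be justifying the invocation of PMP under the weak regularity (only measurability) assumed for $\psi$ in Assumption~\ref{ass:psi}. If PMP cannot be cited off the shelf, I would fall back on a direct needle-variation argument: at any Lebesgue point $t_0$ of $u^*$ at which $u^*(t_0)$ is not a canonical vector, I would replace $u^*$ on a short interval around $t_0$ by the strictly better canonical vector produced by Lemma~\ref{lem:max_cond}, and use a first-order expansion of the terminal cost in the interval length (with sensitivity captured by the state transition matrix of the linearized dynamics) to show that this perturbation strictly lowers the total cost, contradicting the optimality of $u^*$.
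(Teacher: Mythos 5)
Your proposal follows essentially the same route as the paper: invoke the Pontryagin principle to obtain a costate $p$ such that $u^*(t)$ pointwise optimizes $u\mapsto \sum_{i}\bigl(\rho_i(t)u_i-\lambda\psi_i(u_i)\bigr)$ over $\mathcal{U}$ (the paper states this in maximization form with $\rho_i(t)=p(t)^\top A_i x^*(t)$, yours is the sign-flipped minimization convention), and then sort the $\rho_i(t)$ and apply Lemma~\ref{lem:max_cond} to conclude $u^*(t)\in\{e_1,\dots,e_N\}$ almost everywhere. The regularity concern you raise is handled in the paper simply by citing Theorem~22.2 and Corollary~22.3 of Clarke, whose hypotheses require only measurable dependence of the data on the control, so no needle-variation fallback is needed.
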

\begin{proof}
Let us take any solution $u^\ast$ to Problem~\ref{prob:relaxed_ver2} and denote by $x^\ast$ the corresponding state.
It follows from Theorem~22.2 and Corollary~22.3 in~\cite{Cla} that there exists a function $p: [0, T]\to \mathbb{R}^n$ that satisfies
\begin{equation}
    u^\ast(t) \in \argmax_{u\in \mathcal{U}} \sum_{i=1}^{N} \left(p(t)^\top A_i x^\ast(t) u_i - \lambda \psi_i(u_i)\right)
\end{equation}
almost everywhere.
Put $\rho_i(t)\triangleq p(t)^\top A_i x^\ast(t)$, 
and define the index $i_k(t)\in\{1,2,\dots,N\}$ so that $i_k(t) \neq i_l(t)$ for any $k \neq l$ and $\rho_{i_k(t)}(t)$ is arranged in descending order, i.e.,
\begin{equation}
    \rho_{i_1(t)}(t) \geq \rho_{i_2(t)}(t) \geq \cdots \geq \rho_{i_N(t)}(t).
\end{equation}
(Note that the index $i_k(t)$ is not uniquely determined in general; however, this does not affect the result.)
Then, it follows from Lemma~\ref{lem:max_cond} that we have
\begin{equation}
    u^\ast(t) \in \{e_{i_1(t)}, \dots, e_{i_m(t)}\},
\end{equation}
where $m\in\{1,2,\dots,N\}$ is the number satisfying 
\begin{equation}
    \rho_{i_1(t)}(t) = \cdots = \rho_{i_m(t)}(t) > \rho_{i_{m+1}(t)}(t) \geq \cdots \geq \rho_{i_N(t)}(t).
\end{equation}
This completes the proof.
\end{proof}

Finally, we have the following theorem that validates the proposed  relaxation.
\begin{theorem}
    Suppose Assumption \ref{ass:psi} holds. Any optimal solution $u^*$ to Problem \ref{prob:relaxed_ver2} is a solution to Problem \ref{prob:original2}.
\end{theorem}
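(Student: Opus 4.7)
The plan is to combine Lemma~\ref{lemma:discreteness} with the vanishing of the regularizer on the feasible set of Problem~\ref{prob:original2}. The key observation is that under Assumption~\ref{ass:psi}, condition~2 gives $\psi_i(0)=\psi_i(1)=0$, so for every $u$ satisfying the original constraints~\eqref{eq:original constraints} we have $\psi(u(t))=0$ almost everywhere, and consequently $\Omega(u)=0$. Therefore the relaxed cost $J_\lambda$ coincides with the original cost $J$ on the feasible set of Problem~\ref{prob:original2}.

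First, I would fix any optimal solution $u^\ast$ to Problem~\ref{prob:relaxed_ver2}. By Lemma~\ref{lemma:discreteness}, $u^\ast$ automatically satisfies the discreteness constraints in \eqref{eq:original constraints} almost everywhere, hence $u^\ast$ is feasible for Problem~\ref{prob:original2}. Moreover, from the remark above, $\Omega(u^\ast)=0$, so
\begin{equation*}
J_\lambda(u^\ast) = x^\ast(T)^\top Q x^\ast(T) = J(u^\ast).
\end{equation*}

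Next, I would pick an arbitrary $u$ feasible for Problem~\ref{prob:original2}. Since $\{0,1\}\subset[0,1]$, such $u$ is also feasible for the relaxed Problem~\ref{prob:relaxed_ver2}, and again by Assumption~\ref{ass:psi}(2) we have $\Omega(u)=0$, so $J_\lambda(u)=J(u)$. Optimality of $u^\ast$ in the relaxed problem then gives
\begin{equation*}
J(u^\ast) = J_\lambda(u^\ast) \leq J_\lambda(u) = J(u),
\end{equation*}
which shows that $u^\ast$ minimizes $J$ over all feasible controls for Problem~\ref{prob:original2}, completing the argument.

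There is essentially no hard step here: the real work has already been done in Lemma~\ref{lem:max_cond} and Lemma~\ref{lemma:discreteness}, which established the discreteness of any relaxed optimizer via a Pontryagin-type maximum principle. The only mild subtlety worth flagging is that the argument critically uses both parts~2 and~3 of Assumption~\ref{ass:psi}: part~3 (positivity on $(0,1)$) is what drives the discreteness in Lemma~\ref{lem:max_cond}, while part~2 (vanishing at the vertices) is what allows the relaxed and original cost functions to agree on the feasible set of Problem~\ref{prob:original2} in the present theorem.
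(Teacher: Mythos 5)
Your proof is correct and follows essentially the same route as the paper: apply Lemma~\ref{lemma:discreteness} to get feasibility of $u^\ast$ for Problem~\ref{prob:original2}, observe that Assumption~\ref{ass:psi}(2) makes $\Omega$ vanish on all controls satisfying \eqref{eq:original constraints} so that $J_\lambda$ and $J$ coincide there, and transfer optimality. The only (minor, and arguably favorable) difference is that you compare $u^\ast$ against an arbitrary feasible control of Problem~\ref{prob:original2} rather than against a presumed optimal solution $v^\ast$, which avoids implicitly assuming that Problem~\ref{prob:original2} attains its minimum.
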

\begin{proof}
    Let $u^*$ and $v^*$ be optimal solutions to Problem \ref{prob:relaxed_ver2} and \ref{prob:original2}, respectively. From Lemma \ref{lemma:discreteness}, $u^*(t)$ satisfies \eqref{eq:original constraints} almost everywhere. Also, since $v^*$ is a solution to Problem \ref{prob:original2}, $v^*(t)$ satisfies \eqref{eq:original constraints} almost everywhere. Then, from 2) of Assumption \ref{ass:psi}, we have 
    \begin{equation}
        \psi(u^*(t))=\psi(v^*(t))=0,
    \end{equation} 
    for almost all $t\in[0,T]$. Therefore, we have
    \begin{equation}
        \Omega(u^*)=\Omega(v^*)=0,\label{eq:Omega=0}
    \end{equation}
    from \eqref{eq:Omega}.
    Let $x(u;t)$ denote the state trajectory of the system \eqref{eq:plant_transformed} by control $u$ from $x(0)=\xi$.
    Then, it follows from the optimality of $u^*$ to Problem~\ikd{\ref{prob:relaxed_ver2}} 
    and \eqref{eq:Omega=0} that
    \begin{equation}
        x(u^*;T)^\top Q x(u^*;T) \leq x(v^*;T)^\top Qx(v^*;T).
        \label{eq:proof_leq}
    \end{equation}
    On the other hand, since $v^*$ minimizes the cost function in \ikd{\eqref{eq:cost_original}}  
    and $u^*$ satisfies \eqref{eq:original constraints} (i.e., $u^*$ is a feasible solution to Problem \ref{prob:original2}), we have
    \begin{equation}
        x(u^*;T)^\top Q x(u^*;T) \geq x(v^*;T)^\top Qx(v^*;T).
        \label{eq:proof_geq}
    \end{equation}
    Therefore, from \eqref{eq:proof_leq} and \eqref{eq:proof_geq}, we have
    \begin{equation}
        x(u^*;T)^\top Q x(u^*;T) = x(v^*;T)^\top Qx(v^*;T).
    \end{equation}
    That is, $u^*$ is also a solution to Problem \ref{prob:original2}.
\end{proof}

\section{\ikd{NUMERICAL COMPUTATION}} 
\label{sec:numerical computation}
In this section, we show a numerical method to solve Problem \ref{prob:relaxed_ver2}.
For this, we adopt time discretization \cite{Nag20}.

First, we divide the time interval
$[0,T]$ into $\ikd{K}$ 
subintervals as
\begin{equation}
[0,T] = [0,h) \cup [h,2h) \cup \cdots \cup [Kh-h,Kh],
\end{equation}
where $h>0$ is the sampling period and $K\in\mathbb{N}$
is the number of subintervals such that $T=Kh$.
On each subinterval, we assume the control signals $u_i(t)$, $i=1,2,\ldots, N$, are constant. Namely, $u_i(t)$ satisfies
\begin{equation}
 u_i(t) = \mathbf{u}_i[k], \quad t\in [kh, (k+1)h), \quad k=0,1,2,\ldots,K-1,
 \label{eq:zero-order-hold-control}
\end{equation}
where $\mathbf{u}_i[k]$ is the constant value of $i$-th control at time step $k$.
Such a control is called a \emph{zero-order-hold} signal, which is the output of a zero-order hold \cite{Nag20}.

Then, on each subinterval, the system \eqref{eq:plant_transformed} can be represented as
\begin{equation}
    \dot{x}(t) = \sum_{i=1}^N A_i \mathbf{u}_i[k] x(t),\quad t \in [kh,(k+1)h).
\end{equation}
By the Euler method, we have a discrete-time system
\begin{equation}
 \begin{split}
    \mathbf{x}[k+1] &= \mathbf{x}[\ikd{k}] + h \sum_{i=1}^N A_i \mathbf{u}_i[k] \mathbf{x}[k]\\ 
    &= \mathbf{x}[k] + h \Phi(\mathbf{x}[k])\mathbf{u}[k],
 \end{split}   
\end{equation}
where $\mathbf{x}[k]\triangleq x(kh)$ and
$\mathbf{u}[k] \triangleq [\mathbf{u}_1[k], \mathbf{u}_2[k], \ldots, \mathbf{u}_N[k]]^\top$.
The initial condition is given by $\mathbf{x}[0]=\xi$.
Next, the cost function is represented as
\begin{equation}
    \mathbf{x}[K]^\top Q\mathbf{x}[K] + \lambda  h\sum_{l=0}^{K-1} \psi(\mathbf{u}[l]).
\end{equation}
Finally, the constraints are given as
\begin{equation}
    0 \leq \mathbf{u}_i[k] \leq 1, \quad i = 1,2,\ldots,N,\quad
    \sum_{i=1}^N \mathbf{u}_i[k] = 1.
\end{equation}
We summarize the discretized optimization problem as follows.
\begin{problem}
    Find the control sequence $\mathbf{u}[k]\in\mathbb{R}^N$, $k=0,1,2,\ldots, \ikd{K-1}$, 
    that solves
    \begin{equation}
        \begin{aligned}
          & \underset{u}{\text{minimize}}
          & & \mathbf{x}[K]^\top \ikd{Q} \mathbf{x}[K] + \lambda h \sum_{l=0}^{K-1}\psi(\mathbf{u}[l])\\ 
          & \text{subject to}
          & & \mathbf{x}[k+1] =\mathbf{x}[k] +  h\Phi\bigl(\mathbf{x}[k]\bigr)\mathbf{u}[k],\quad
          \mathbf{x}[0]=\xi,\\
          & & & \mathbf{u}[k] \in\mathcal{U},\quad k=0,1,\ldots,K-1.
        \end{aligned}
    \end{equation}
  \end{problem}

Although the function $\psi$ is non-convex in general, the optimization problem can be numerically solved by the DC (difference of convex functions) programming \cite{Ike24}.
Also, we can use numerical optimization packages such as \texttt{cvxpy} \cite{DiaBoy16,AgrVerDiaBoy18} and \texttt{casadi} \cite{Casadi}.

\section{NUMERICAL EXAMPLES}
In this section, we show design examples of two- and three-mode switched systems.
We use the model predictive control (MPC) technique \cite{Mac} to realize feedback control with the proposed finite-horizon optimal control method.

\subsection{Example 1: Two-mode system}
Let us consider a two-mode switched system with
\begin{equation}
    A_1 =
    \begin{bmatrix}
        -3 & 1\\ 
        1 & 2
    \end{bmatrix},\quad
    A_2 = 
    \begin{bmatrix}
        1 & -0.5\\ 
        -3 & -5
    \end{bmatrix}.
\end{equation}
We note that $A_1$ and $A_2$ are unstable.
We take the initial state $x(0)=\xi=[-3,3]^\top$.

We adopt the regularization term $\Omega(u)$ as in \eqref{eq:regularization term}.
We simply take the weighting matrix $Q=I$ (the identity matrix).
We choose the regularization parameter $\lambda=1$, which is chosen by trial and error.
The sampling period for the Euler discretization is $h=0.1$.
The prediction horizon length for MPC is set to $K=10$.
With these parameters, we execute the simulation.

Figure \ref{2switch_control} shows the control signals $u_1(t)$ and $u_2(t)$ obtained by the proposed method. We can see that these signals satisfy the discreteness constraint in \eqref{eq:original constraints}.
\begin{figure}[t]
      \centering
      \includegraphics[width=1.0\linewidth]{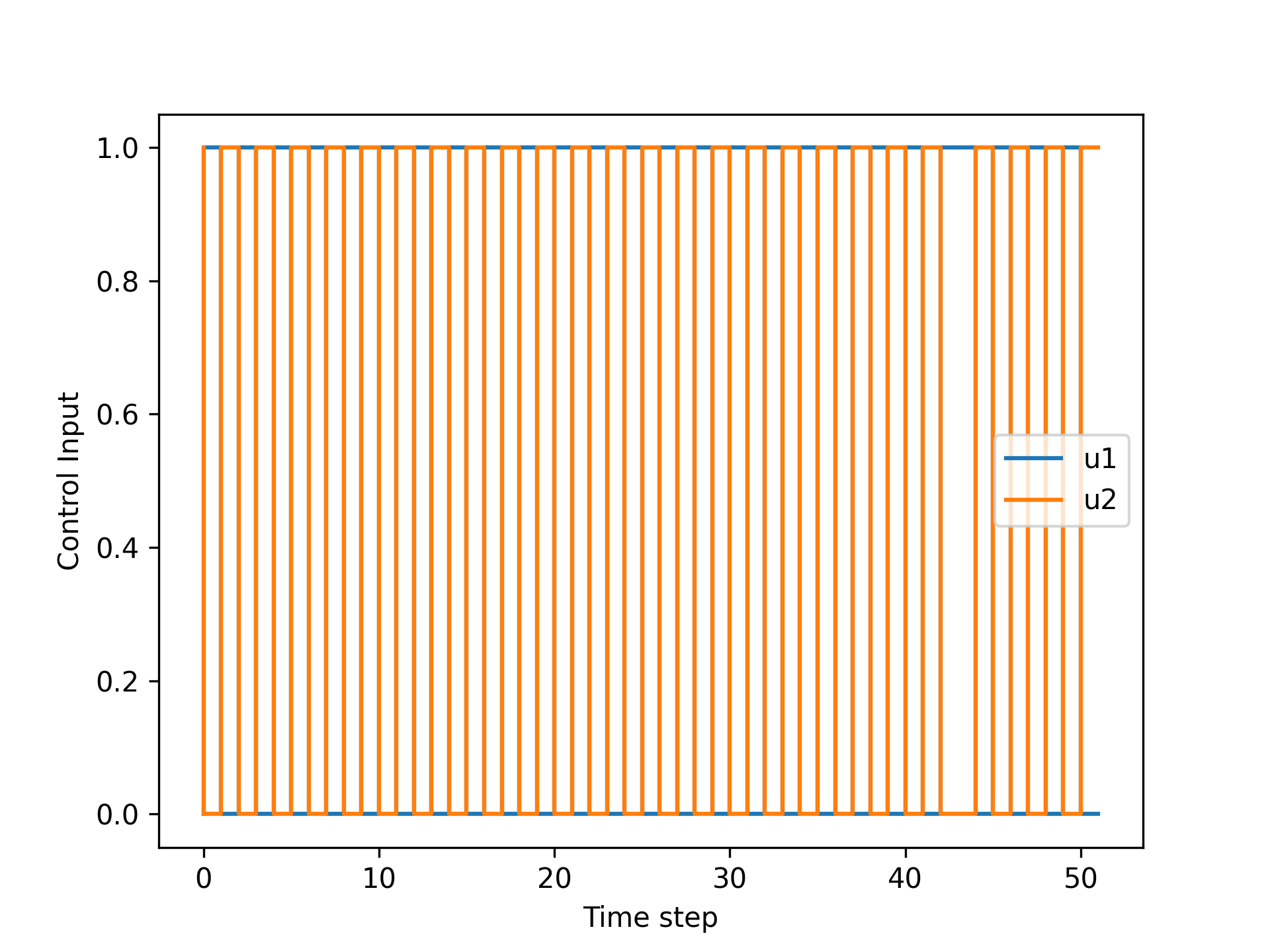}
      \caption{Control signals $u_1(t)$ and $u_2(t)$.}
      \label{2switch_control}
\end{figure}
Figure~\ref{2switch_state} shows the two states $x_1(t)$ and $x_2(t)$ under the obtained control. The states converge to the origin by the switching. Also, we show the state trajectory in the state space in Figure \ref{2switch_state-trajectory}.
\begin{figure}[t]
      \centering
      \includegraphics[width=1.0\linewidth]{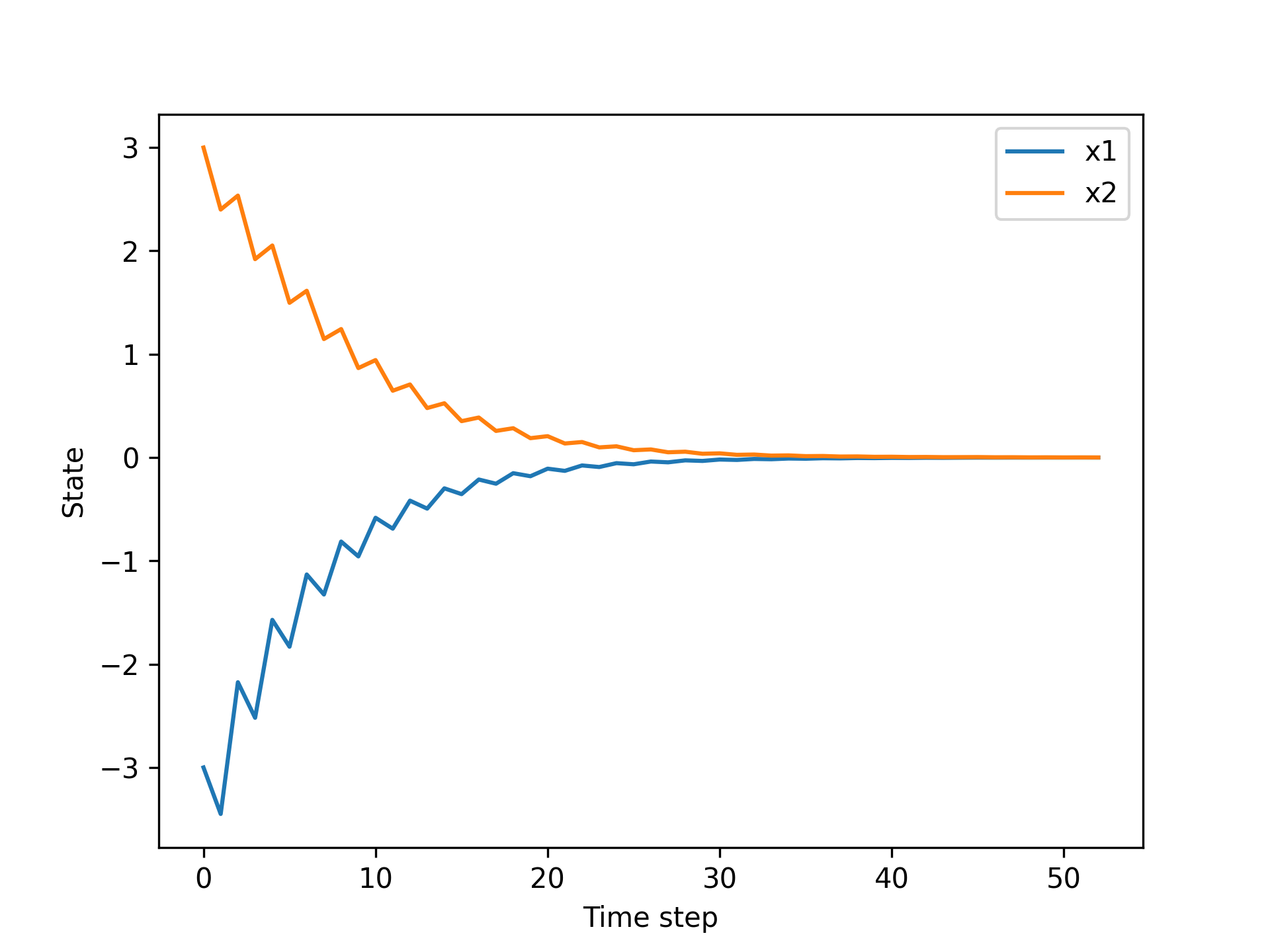}
      \caption{The states $x_1(t)$ and $x_2(t)$.}
      \label{2switch_state}
\end{figure}
\begin{figure}[t]
      \centering
      \includegraphics[width=1.0\linewidth]{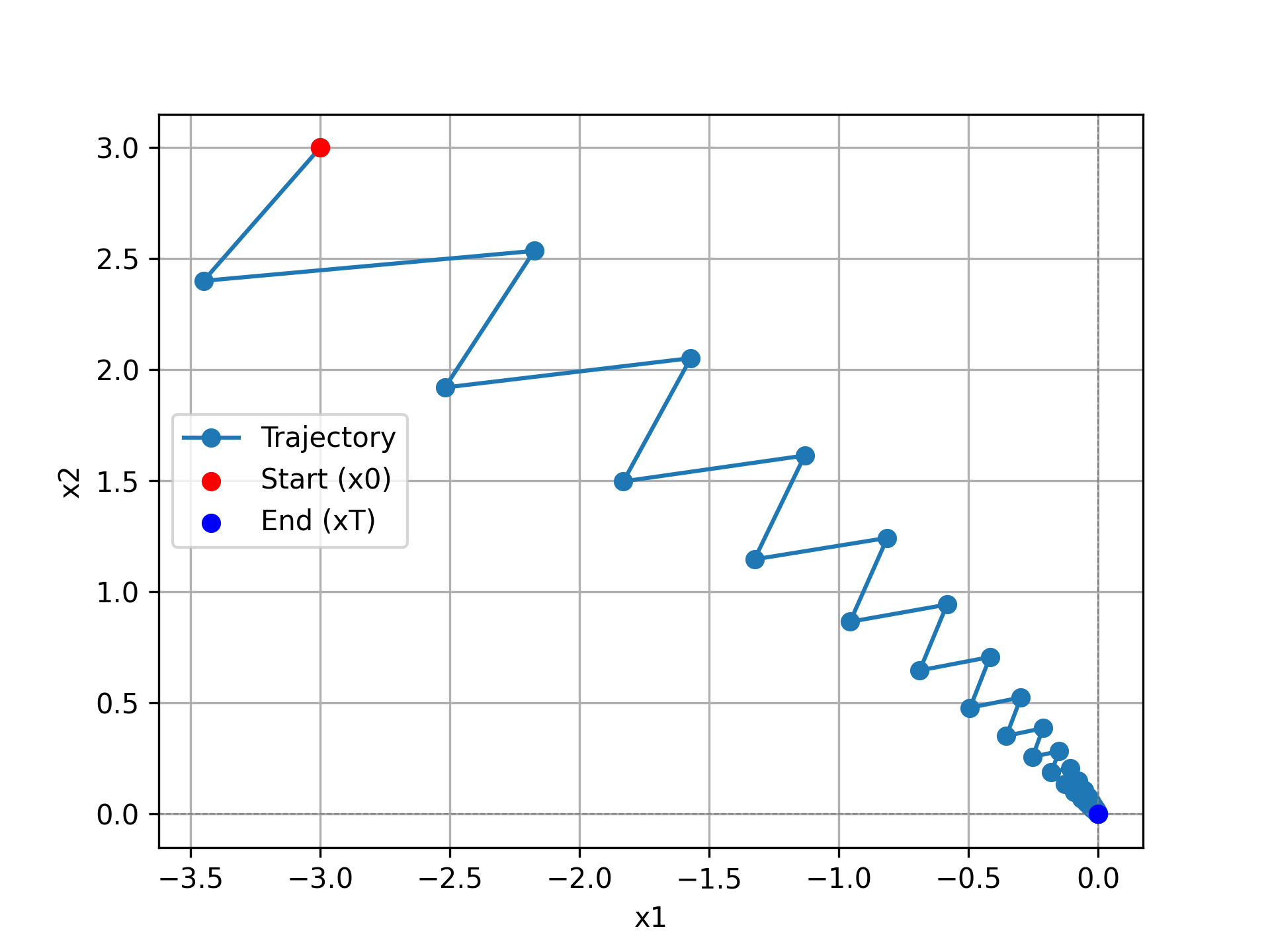}
      \caption{The state trajectory $(x_1(t),x_2(t))$ in the state space $\mathbb{R}^2$.}
      \label{2switch_state-trajectory}
\end{figure}
By these results, the proposed method successfully generates switching control signals that stabilize the unstable system.

\subsection{Example 2: Three-mode system}
We here consider a three-mode switched system with the following three unstable matrices:
\begin{equation}
    A_1 =
    \begin{bmatrix}
        -3 & 1\\ 
        1 & 2
    \end{bmatrix},\quad
    A_2 = 
    \begin{bmatrix}
        7 & 1\\ 
        7 & -15
    \end{bmatrix},\quad
    A_3 = 
    \begin{bmatrix}
        -5 & 2\\ 
        4 & 6
    \end{bmatrix}.
\end{equation}
We adopt \eqref{eq:regularization term} as the regularization term with the regularization parameter $\lambda=10000$, which is chosen by trial and error. For the other parameters, we take the same values as in Example 1. Namely, we take
$x(0)=\xi=[-3,3]^\top$, $Q=I$, $h=0.1$, and $K=10$.

Figure ~\ref{3switch_control} shows the control $u_1(t)$, $u_2(t)$, and $u_3(t)$, which we can see satisfy the discreteness constraint in \eqref{eq:original constraints}.
\begin{figure}[t]
      \centering
      \includegraphics[width=1.0\linewidth]{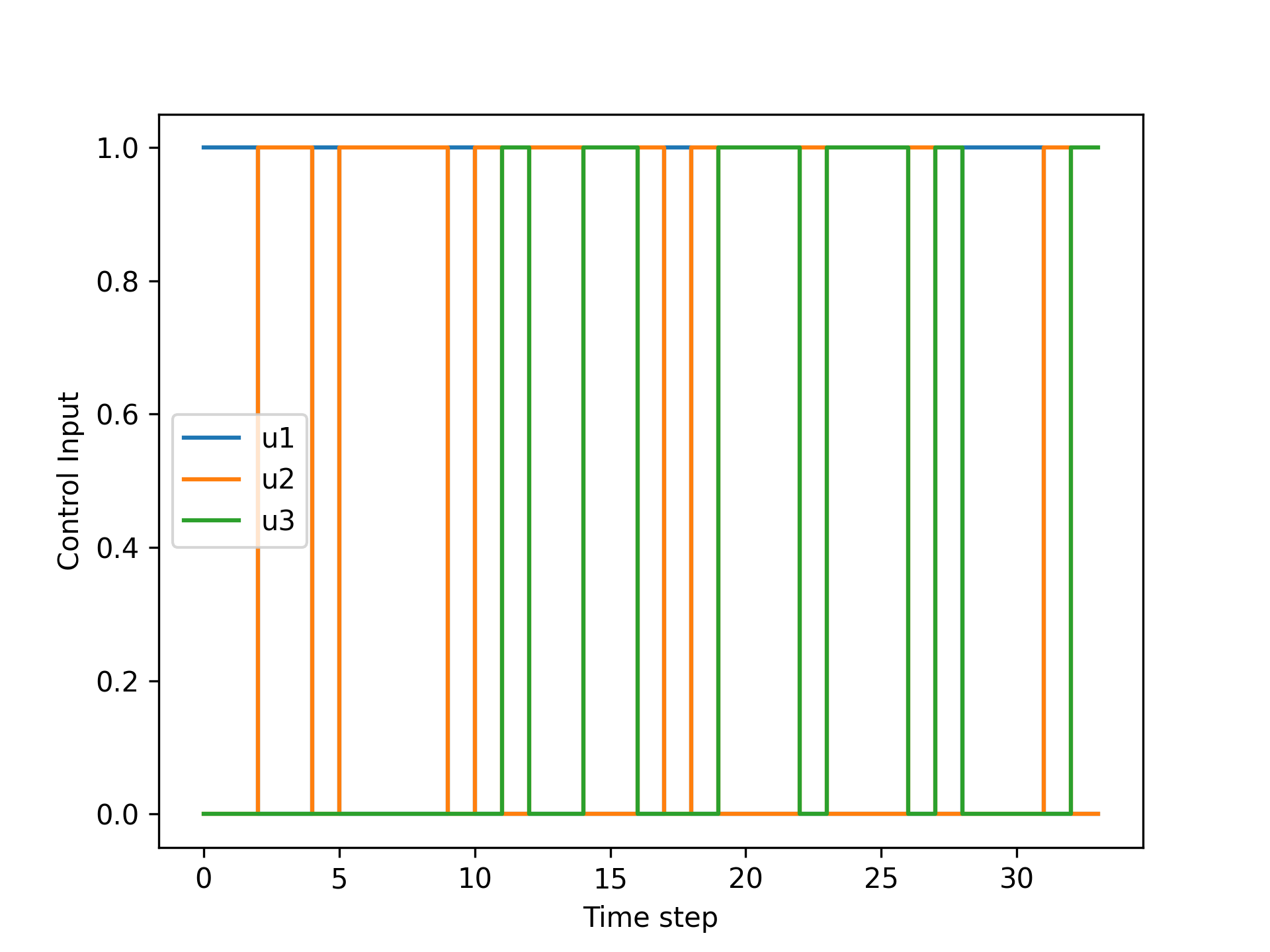}
      \caption{Control signals $u_1(t)$, $u_2(t)$, and $u_3(t)$.}
      \label{3switch_control}
\end{figure}
By this switching signal, the states $x_1(t)$ and $x_2(t)$ converge to the origin, as shown in Figure \ref{3switch_state}.
\begin{figure}[t]
      \centering
      \includegraphics[width=1.0\linewidth]{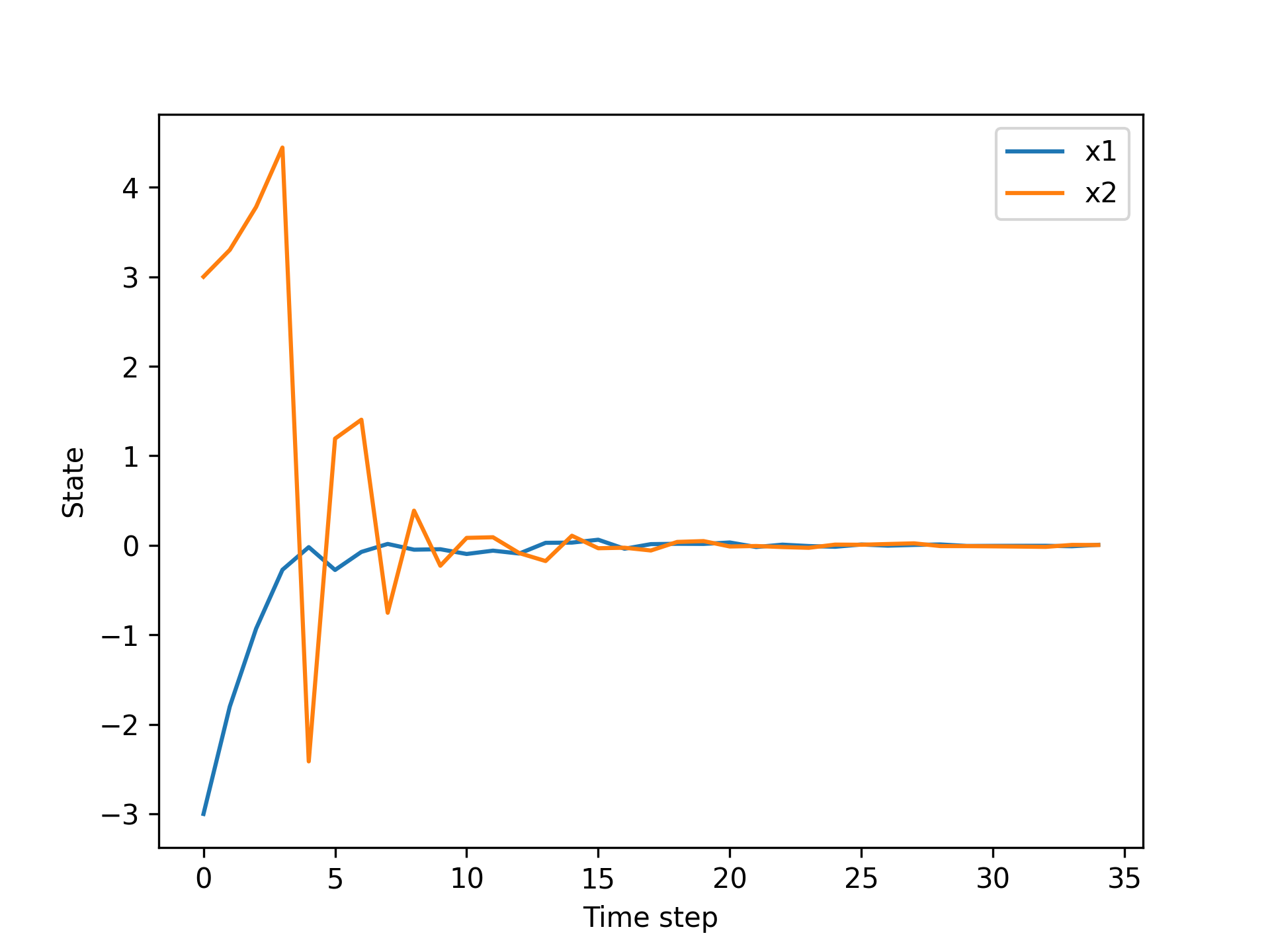}
      \caption{The states $x_1(t)$ and $x_2(t)$.}
      \label{3switch_state}
\end{figure}
The corresponding state trajectory in the state space is shown in Figure \ref{3switch_state-trajectory}.
\begin{figure}[t]
      \centering
      \includegraphics[width=1.0\linewidth]{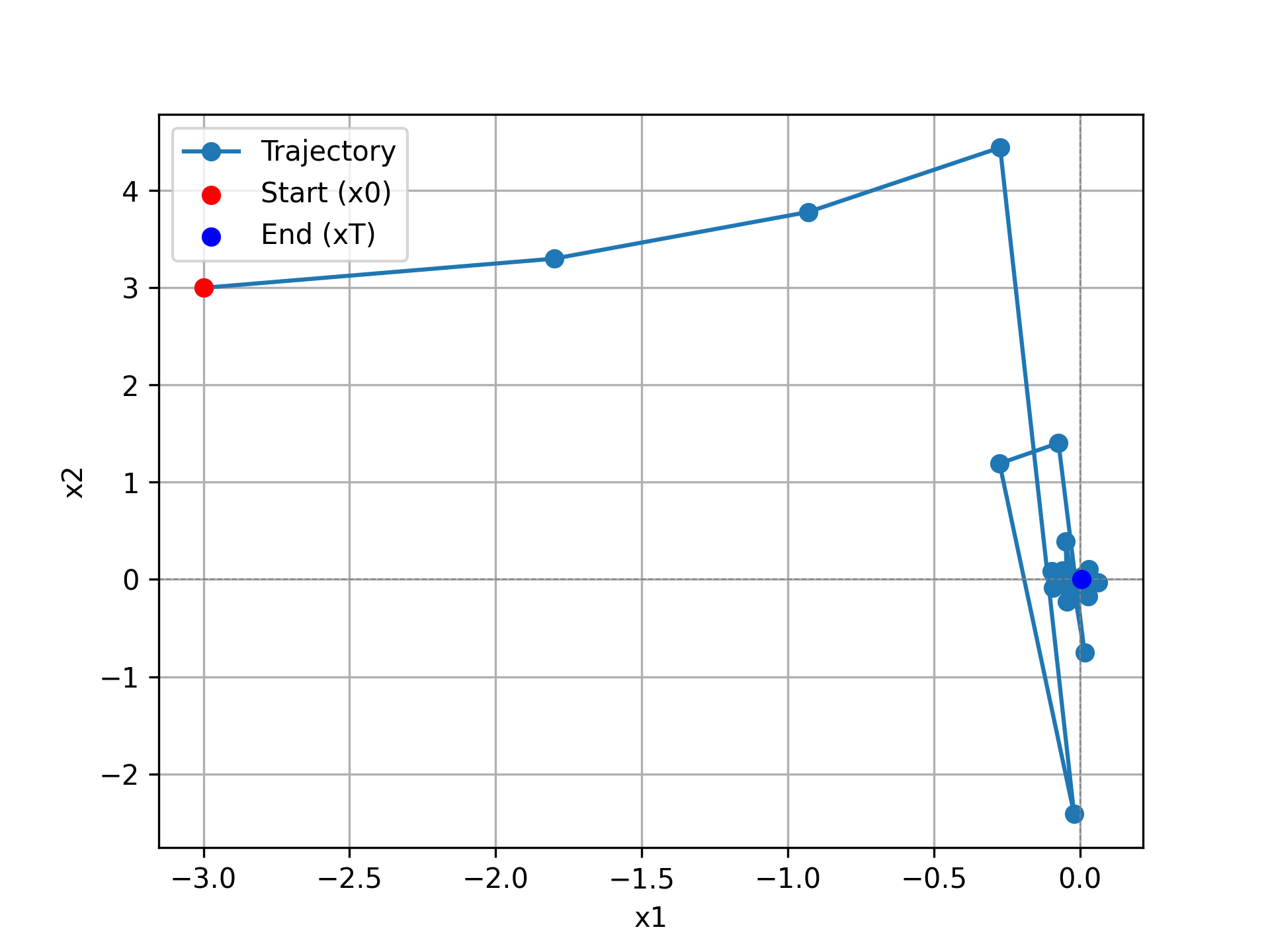}
      \caption{The state trajectory $(x_1(t),x_2(t))$ in the state space $\mathbb{R}^2$.}
      \label{3switch_state-trajectory}
\end{figure}
As shown in these figures, the proposed method is effective in designing switching control signals.

\section{CONCLUSION}
In this paper, we have proposed a design method of switching signals for linear time-invariant systems based on sparse optimization. Although the original problem is combinatorial optimization, we proposed a relaxed problem that can be efficiently solved. We have shown that any solution to the relaxed problem is also a solution to the original problem, and hence we can easily compute switching signals.
A merit of the proposed method is that the problem is described as an optimal control problem, and hence it is easily combined with a general constrained optimal control problem.  Future work includes the choice of the non-convex function $\psi(u)$ in Assumption \ref{ass:psi}, optimal control with state constraints, and 
the design of switching control signals for \emph{nonlinear} systems.



\bibliographystyle{IEEEtran}
\bibliography{IEEEabrv,reference-e}

\end{document}